\font\smallsc=cmcsc10
\font\smallsl=cmsl10
\newtheorem{theorem}{Theorem}
\newtheorem{lemma}[theorem]{Lemma}
\newtheorem{proposition}[theorem]{Proposition}
\newcommand{\gon}{\mathrm{gon}}
\newcommand{\Pbb}{\mathbb P}
\newcommand{\Cbb}{\mathbb C}
\newcommand{\Ocal}{\mathcal O}
\newcommand{\nlinha}[1]{{n'}^{(#1)}}
\renewcommand{\:}{\colon }
\begin{document}

\title{Gonality of curves whose normalizations are\\ one or two copies of $\mathbb P^1$}
\author{Juliana Coelho\\{\scriptsize julianacoelhochaves@id.uff.br}
}

\maketitle

\begin{abstract}
We study the gonality of curves $C$ over $\mathbb C$ whose normalization consist of one or two copies of $\mathbb P^1$. 
In the first case, $C$ is a nodal curve with $g(C)$ nodes, and in the second case $C$ is a so-called binary curve. In any case we show that 
the usual bound $\gon(C)\leq \left\lfloor\frac{g(C)+3}{2}\right\rfloor$ holds if $g(C)\geq 2$, with equality holding generically.
\end{abstract}


\section{Introduction}


A smooth curve $C$ is said to be $k$-gonal if it admits a degree-$k$ map to $\mathbb P^1$. 
The gonality $\gon(C)$, that is, the minimum $k$ such that $C$ is $k$-gonal, is an important numerical invariant in the study of algebraic curves. When considering smooth curves in families, one feels the need to consider singular curves as well, since smooth curves can degenerate to singular ones. 

There are a few possible ways to extend the notion  of gonality to singular curves. 
One way is to simply take the same definition as in the smooth case, as is done in \cite{sthor} and   \cite{renato}.
The drawback here is that this definition does not work very well in families, since not all curves that are limits of smooth $k$-gonal ones have maps of degree $k$ to $\Pbb^1$. 
One solution is to consider limits of maps from smooth curves to $\Pbb^1$.
This can be done using stable maps, as in \cite{cv},
or using Harris and Mumford's admissible covers \cite{HarrMum}. 
In this paper we adopt the latter point of view, as was done previously 
in \cite{paper1}, where admissible covers were used to obtain bounds on the gonality of stable curves from the gonality of its components,
and \cite{paper2}, where two-component nodal curves of gonality 2 or 3 were characterized, with the simplifying hypothesis that the components are not rational.


It is worth mentioning other approaches to the problem.
For instance, since a map from a smooth curve to $\Pbb^1$ is given by a linear series on the curve, 
one can consider Eisenbud and Harris' limit linear series introduced in \cite{eisenharris} (see for instance \cite{em} and \cite{osser}). 
Furthermore, some authors have been working on relating the gonality of stable curves to the gonality of graphs and tropical curves (see \cite{caporaso}).





\subsection{Main Results}

In this paper we always work over the field of complex numbers $\mathbb{C}$,
and we study the gonality of nodal curves $C$ whose normalizations are one (in Subsection \ref{sectionirred}) or two (in Subsection \ref{sectionbinary}) copies of $\mathbb P^1$.
In the first case, $C$ is an irreducible curve with $g(C)$ nodes,
and in the second case, $C$ is a so-called binary curve.

Both types of curves have been extensively used as degeneration targets when proving results that hold generically,  since they are combinatorically easier to handle.  
For instance, in  \cite{gh80} the Brill-Noether theory of a general curve was studied by degenerating it to a irreducible curve of genus $g$ with $g$ nodes. 
In the same way, Prym maps we studied in \cite{colfred} via degeneration to binary curves.
Moreover, these curves were also used  as test models for results that hold for smooth curves but are yet unknown for singular ones, 
as in \cite{capobinary}, \cite{franciosi} and \cite{he}.

In Section \ref{sec:back} we review some background, and we prove some auxiliary results on admissible covers (see Subsection \ref{sec:admiss})
and maps between $\mathbb P^1$ (see Subsection \ref{sec:mapsp1}). 
In Section \ref{sec:main} we prove the main results of the paper, which are Theorems \ref{thmirred} and \ref{teo:binary}.  

Denote by $I_g$ (resp. $B_g$) the locus of irreducible curves with $g$ nodes in the moduli of genus-$g$ stable curves $\overline M_g$. 

\medskip
\noindent{\bf Theorem.} \; {\it 
Let $C$ be an irreducible curve with $g$ nodes (resp. a binary curve) 
having genus $g\geq 2$. 
Then
$$\gon(C)\leq \left\lfloor\dfrac{g+3}{2}\right\rfloor,$$
and equality holds if $C$ is general in $I_g$ (resp. $B_g$).}
\medskip

\section{Preliminaries}\label{sec:back}

\subsection{Technical background}\label{sec:back}

In this paper we always work over the field of complex numbers $\mathbb{C}$.


A \textit{curve} $C$ is a connected, projective and reduced scheme of dimension $1$ over $\Cbb$. 
The \emph{genus} of $C$ is $g(C):=h^1(C,\Ocal_C)$.
A \textit{subcurve} $Y $ of $C$ is a reduced subscheme of pure dimension $1$, or equivalently, a reduced union of irreducible components of $C$. 
If $Y\subseteq C$ is a subcurve, we say $Y^{c}:=\overline{C\smallsetminus Y}$ is the \emph{complement} of $Y$ in $C$.

A \textit{nodal curve} $C$ is a curve with at most ordinary double points, called \emph{nodes}. 
A node is said to be \emph{separating} if there is a subcurve $Y$ of $C$ such that $Y\cap Y^c=\{n\}$. 
A \emph{rational chain}  is a nodal curve of genus 0. 
We remark that a curve  is a rational chain if and only if its nodes are all separating and its components are all rational.

Let  $g$ and $n$ be non negative integers such that $2g-2+n>0$. A \textit{$n$-pointed stable curve of genus $g$} is a curve $C$ of genus $g$ together with $n$ distinct \emph{marked points} $p_1,\ldots,p_n\in C$  such that for every  smooth rational component $E$ of $C$,  
 the number of points in the intersection   $E\cap E^{c}$ plus the number of indices $i$ such that  $p_i$ lies on $E$ is at least three. 
A \textit{stable curve} is a 0-pointed stable curve.
We denote by $\overline M_g$ the \emph{moduli space of stable curves of genus $g$}.

Moreover, we denote by  $M_{0,n}$ the \emph{moduli space of $n$-pointed  smoth curves of genus 0}.  
Such a pointed curve is simply $\mathbb P^1$ with a choice a $n$-uple of distinct points, where two $n$-uples give the same pointed curve in $M_{0,n}$ if there is an automorphism of $\mathbb P^1$ sending one to the other.
More preciselly,  if $\Delta$ is the set of $n$-uples $(p_1,\ldots,p_n)\in (\mathbb P^1)^n$ such that $n_i\neq n_j$ for all  $i\neq j$, then there is a surjective map
\begin{equation}\label{eq:M0n}
P_n:=(\mathbb P^1)^n\setminus \Delta\rightarrow M_{0,n}
\end{equation}
whose fibers have dimension 3. 
In particular, $\dim(M_{0,n})=n-3$.

\subsection{Gonality and admissible covers}\label{sec:admiss}

A smooth curve is $k$-gonal if it admits a map of degree $k$ to $\mathbb P^1$. 
A stable curve $C$ is 
$k$-gonal if it is a limit of smooth $k$-gonal curves in the moduli space $\overline M_g$ of stable curves. 
 More precisely, 
a \textit{smoothing} of a curve $C$ is a proper and flat morphism 
$f\:\mathcal{C}\rightarrow \mathrm{Spec}(\mathbb{C}[[t]])$ 
whose fibers are curves, with special fiber $C$ and
such that $\mathcal{C}$ is regular.
A stable curve $C$ is then said to be \emph{$k$-gonal} if it admits a
 smoothing $f\:\mathcal{C}\rightarrow S$ whose general fiber is
a $k$-gonal smooth curve and the special fiber is $C $.

For the sake of completeness, we remark that some authors consider any curve to be $k$-gonal if it admits a degree-$k$ map to $\mathbb P^1$. 
Altough this is a simpler definition, it does not necessarily work well in families.

The \emph{gonality} of a stable curve $C$, denoted $\gon(C)$, is the smallest $k$ such that $C$ is $k$-gonal. 
We say that $C$ is \emph{hyperelliptic} if $\gon(C)=2$. 
It is a well-known fact that all stable curves of genus $2$ are hyperelliptic.

Alternatively, $k$-gonal stable curves can be characterized in terms of admissible covers.
A \emph{$k$-sheeted admissible cover} consists of  a finite morphism $\pi:C\rightarrow B$ of degree $k$, such that $B$ and $C$ are nodal curves and:
\begin{enumerate}
\item $\pi^{-1}\left(B^{\text{sing}}\right)  =C^{\text{sing}}$;
\item $\pi$ is simply branched away from $C^{\text{sing}}$, that is, 
over each smooth point of $B$ there exists at most one point of $C$ 
where $\pi$ is ramified and this point has ramification index 2;
\item $B$ is a stable pointed curve of genus 0, when considered with its smooth points on the branch locus of $\pi$;
\item for every node $q$ of $B$ and every node $n$ of $C$ 
lying over it, the
two branches of $C$ over $n$ map to the branches of $B$ over $q$ with the same
ramification index.
\end{enumerate}


Let $C$ be a nodal curve. We say that a nodal curve $C^{\prime}$ is \emph{stably equivalent} to $C$ if $C$ can be obtained from $C^{\prime}$ by contracting to a point some of the smooth rational components of $C^{\prime}$ meeting the other
components of $C^{\prime}$ in only one or two points. In that case, $g(C)=g(C')$ and there is a contraction map $\tau\:C'\rightarrow C$.
We say that a point $p'\in C'$ \emph{lies over} a point $p\in C$ if $\tau(p')=p$.

We remark that, if 
$C'$ is stably equivalent to a curve $C$ then, by \cite[Lemma 3.2]{paper1},
we can consider the smooth components of $C$ as components of $C'$ as well.

The following result is a consequence of \cite[Thm. 4, p. 58]{HarrMum} and relates the notion of admissible covers to that of gonality of stable curves.

\begin{theorem}[Harris-Mumford]\label{HarrMum}
A stable curve $C$ is $k$-gonal if and only if
there exists a $k$-sheeted admissible cover $C^{\prime}\rightarrow B$, 
where $C^{\prime}$ is stably equivalent to $C$.
\end{theorem}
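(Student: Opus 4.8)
The plan is to derive this equivalence from the Harris--Mumford theory of admissible covers, that is, from the existence of a proper (in fact projective) moduli space $\overline{H}_{k,b}$ parametrizing $k$-sheeted admissible covers of stable genus-$0$ curves with $b$ branch points, which is the content of \cite[Thm.~4]{HarrMum}. Here $b=2(g+k-1)$ is the value forced by Riemann--Hurwitz for a degree-$k$ map from a smooth genus-$g$ curve to $\Pbb^1$, since $2g-2=-2k+b$. The space $\overline{H}_{k,b}$ carries a universal admissible cover together with two natural maps: one to $\overline{M}_{0,b}$ recording the base $B$ with its branch points, and one to $\overline{M}_g$ recording the stabilization of the covering curve. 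Its dense open locus, where $B\cong\Pbb^1$ with $b$ distinct branch points, is the classical Hurwitz space $H_{k,b}$ parametrizing honest simply-branched degree-$k$ covers, and a smooth genus-$g$ curve is $k$-gonal precisely when it occurs as the source of such a cover. I would use this dictionary throughout.

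For the implication from admissible cover to gonality, suppose $\pi\colon C'\to B$ is a $k$-sheeted admissible cover with $C'$ stably equivalent to $C$. This is a closed point of $\overline{H}_{k,b}$. Since $H_{k,b}$ is dense, I can choose an arc $\mathrm{Spec}(\Cbb[[t]])\to\overline{H}_{k,b}$ hitting this point at $t=0$ and landing in $H_{k,b}$ over the generic point. Pulling back the universal cover yields a family $\mathcal{C}'\to\mathrm{Spec}(\Cbb[[t]])$ whose generic fiber is a smooth $k$-gonal curve and whose special fiber is $C'$. Contracting the unstable rational components of $C'$ (the stabilization operation, which by definition of stable equivalence yields the stable model $C$, as $C$ itself is stable) produces the relative stable model, a family with unchanged smooth $k$-gonal generic fiber and special fiber $C$; after a further finite base change and normalization to make the total space regular, this is a smoothing of $C$ in the sense of the paper, so $C$ is $k$-gonal.

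For the converse, suppose $C$ is $k$-gonal via a smoothing $f\colon\mathcal{C}\to\mathrm{Spec}(\Cbb[[t]])$ with smooth $k$-gonal generic fiber $\mathcal{C}_\eta$. A choice of degree-$k$ map $\mathcal{C}_\eta\to\Pbb^1$ gives a $\Cbb((t))$-point of $H_{k,b}$, that is, a map $\mathrm{Spec}(\Cbb((t)))\to\overline{H}_{k,b}$. By the valuative criterion of properness for $\overline{H}_{k,b}$, after the finite base change $t\mapsto t^N$ this extends to $\mathrm{Spec}(\Cbb[[t]])\to\overline{H}_{k,b}$, whose closed point is an admissible cover $C'_0\to B_0$. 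The curve $C'_0$ is the special fiber of a family of nodal curves whose generic fiber agrees, after base change, with $\mathcal{C}_\eta$; since $C$ arises the same way from $f$, the two curves have a common stable model, hence $C'_0$ is stably equivalent to $C$, giving the required admissible cover.

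The essential input---and what makes this genuinely a theorem of \cite{HarrMum} rather than a formal manipulation---is the properness of $\overline{H}_{k,b}$ together with the identification of its boundary points as exactly the admissible covers axiomatized by conditions (1)--(4). The hard part is properness: given a one-parameter family of honest covers degenerating as $t\to0$, one must produce a limiting admissible cover by running semistable reduction simultaneously on source and target and matching the ramification indices of the two branches across each node, which is precisely condition (4). I would invoke this as a black box; the remaining work is the translation between the smoothing definition of $k$-gonality and the moduli-theoretic picture, and the verification that stabilizing the source curve coincides with the paper's notion of stable equivalence.
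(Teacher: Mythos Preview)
Your sketch is correct and is in fact the standard argument behind the result, but note that the paper does not actually give a proof: its entire argument is the one-line citation ``Cf.\ \cite[Thm.~3.160, p.~185]{HM}.'' So there is nothing to compare in terms of approach---you have simply unpacked what that citation contains, namely the construction and properness of $\overline{H}_{k,b}$ from \cite{HarrMum} together with the translation between limits in $\overline{M}_g$ and stabilization of the source curve of a limiting admissible cover. Your outline is faithful to that argument; the only remark I would add is that when you say ``the two curves have a common stable model, hence $C'_0$ is stably equivalent to $C$,'' you are implicitly using that $C$ itself is already stable, so that the stable model of $C'_0$ is literally $C$ and the contraction $C'_0\to C$ realizes the stable equivalence in the paper's sense.
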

\begin{proof}
Cf. \cite[Thm. 3.160, p. 185]{HM}.
\end{proof}

The next two thechnical results are similar to \cite[Proposition 10]{paper2} and will be used in Subsections \ref{sectionirred} and \ref{sectionbinary}.

\begin{lemma}\label{lemairred}
Let $C$ be an  irreducible nodal curve with nodes $n_1,\ldots,n_\delta$.
Let $\pi\:C'\rightarrow B$ be an admissible cover, where $C'$ is stably equivalent to $C$. 
Let $C^\nu$ be the normalization of $C$ and let $n_j^{(1)},n_j^{(2)}\in C^\nu$ be the branches of $n_j$, for $j=1,\ldots,\delta$. 
Then $C^\nu$ is a component of $C'$ and 
$$\deg(\pi)\geq \deg(\pi|_{C^\nu})+\ell,$$
where $\ell$ is the cardinality of
$\{1\leq j\leq \delta \ | \ \pi(n_j^{(1)})\neq \pi(n_j^{(2)})\}$.
\end{lemma}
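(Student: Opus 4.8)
The plan is to unwind the definition of an admissible cover and see what the local picture over the nodes of $C$ forces on $C'$. First I would invoke \cite[Lemma 3.2]{paper1}: since $C^\nu$ is a smooth component of $C$ — wait, $C^\nu$ is the normalization of the whole curve, which is a single copy of $\mathbb P^1$, hence a smooth rational curve — and $C'$ is stably equivalent to $C$, the smooth components of $C$ can be taken to be components of $C'$. But one must be careful: $C$ itself is irreducible and singular, so $C^\nu$ is not literally a subcurve of $C$. Instead, since $C'$ is stably equivalent to $C$, the contraction map $\tau\:C'\to C$ collapses trees/chains of rational curves, and the partial normalization appearing inside $C'$ must contain a component mapping birationally onto $C$, which is precisely $C^\nu$. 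I would spell this out: $C'$ has a unique component $\Gamma$ with $\tau(\Gamma)=C$ set-theoretically, and $\Gamma\cong C^\nu$ because $\tau|_\Gamma$ is a finite birational morphism from a nodal curve to the nodal curve $C$ whose only singularities are the $\delta$ nodes, so $\Gamma$ is the normalization. Over each node $n_j$, the two points $n_j^{(1)},n_j^{(2)}\in\Gamma=C^\nu$ must each be nodes of $C'$ (by condition (1) on admissible covers, preimages of nodes are nodes), attached to the rest of $C'$.

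Next I would bound the degree. The key is condition (1): $\pi^{-1}(B^{\mathrm{sing}})=(C')^{\mathrm{sing}}$, so $\pi$ restricted to the smooth locus of $C'$ is unramified-at-nodes in the sense that nodes go to nodes. Fix $j$ with $\pi(n_j^{(1)})\neq \pi(n_j^{(2)})$. Each $n_j^{(i)}$ is a node of $C'$, lying over a node $q_j^{(i)}:=\pi(n_j^{(i)})$ of $B$, and since the two branch-points $q_j^{(1)}\neq q_j^{(2)}$ are distinct points of $B$, the component of $C'$ other than $C^\nu$ passing through $n_j^{(i)}$ is a genuinely separate piece of $C'$. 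The plan is to show that $\pi|_{C^\nu}$ already ``uses up'' $\deg(\pi|_{C^\nu})$ sheets over the generic point of $B$, and that each such index $j$ contributes at least one additional sheet coming from the component of $C'$ glued at $n_j^{(1)}$ (or $n_j^{(2)}$). Concretely: let $B_0$ be the image $\pi(C^\nu)$, a component of $B$; for a general point $b\in B_0$ the fiber $\pi^{-1}(b)$ meets $C^\nu$ in $\deg(\pi|_{C^\nu})$ points. Now degeneration/specialization: the total degree $\deg(\pi)$ equals $\#\pi^{-1}(b)$ counted with multiplicity for any $b\in B_0$, and I want to exhibit, for each of the $\ell$ relevant $j$, a point of $\pi^{-1}(b)$ not on $C^\nu$. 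This is where I would use that the branch point $q_j^{(1)}=\pi(n_j^{(1)})$ lies on $B_0$ but $q_j^{(2)}=\pi(n_j^{(2)})$ need not; at $q_j^{(1)}$, the fiber of $\pi$ contains the node $n_j^{(1)}$, whose other branch lies on a component $D_j\subset C'$ with $\pi(D_j)$ a component of $B$ glued to $B_0$ at $q_j^{(1)}$; propagating $D_j$'s contribution back over the generic point of $B_0$ via the flatness/finiteness of $\pi$ on each component gives the extra sheet, and distinctness of the $q_j^{(1)}$'s (which follows because the $n_j^{(1)}$ are distinct points of $C^\nu$ and $\pi|_{C^\nu}$ is a morphism — though not necessarily injective, so I need $\pi(n_j^{(1)})$ pairwise distinct, which requires more care, see below) ensures the $\ell$ extra sheets are distinct.

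The main obstacle I anticipate is exactly this last point: the $\ell$ additional contributions must be genuinely distinct, i.e. one must rule out that two different indices $j,j'$ produce the ``same'' extra sheet. I would handle this by a careful bookkeeping argument on the dual graph of $C'$: removing the component $C^\nu$ from $C'$ disconnects it into subcurves, and each node $n_j^{(i)}$ that is a ``branch point going somewhere new'' attaches $C^\nu$ to one of these subcurves; summing the local degrees $\deg(\pi|_{\text{subcurve}})\ge 1$ over the subcurves attached at the points $n_j^{(1)}$ (for $j$ in our set) and adding $\deg(\pi|_{C^\nu})$ gives a lower bound for $\deg(\pi)$, provided these subcurves are distinct or, if not distinct, the counting still goes through because a subcurve attached at two such nodes contributes its degree at least twice to the fiber over a general point of $B_0$ — one for each of the two attaching nodes, as these sit over distinct points of $B_0$ (here condition (3), stability of $B$, and condition (4), matching ramification indices at nodes, are what I would lean on to guarantee the branches map to genuinely distinct points and the degrees add correctly). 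Once this accounting is pinned down, the inequality $\deg(\pi)\ge \deg(\pi|_{C^\nu})+\ell$ falls out; I would also double-check the degenerate sub-case where $C^\nu\to B_0$ is constant (if $B_0$ is a point), which forces $\deg(\pi|_{C^\nu})$ to be interpreted appropriately and is presumably excluded or trivial since $B$ is connected of genus $0$ with enough marked points.
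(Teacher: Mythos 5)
Your first step (realizing $C^\nu$ as a component of $C'$ via \cite[Lemma 3.2]{paper1} and the contraction $\tau\colon C'\to C$) matches the paper. But the core of the degree bound has a genuine gap. To get $\deg(\pi)\geq \deg(\pi|_{C^\nu})+\ell$ one must exhibit, over a \emph{general} point $q$ of $B':=\pi(C^\nu)$, an extra point of $\pi^{-1}(q)$ off $C^\nu$ for each relevant $j$; equivalently, for each such $j$ one needs a component of $C'$ other than $C^\nu$ mapping \emph{onto} $B'$. Your mechanism does not produce this: the component $D_j$ attached to $C^\nu$ at the node $n_j^{(1)}$ maps to a component of $B$ adjacent to $B'$ at the node $\pi(n_j^{(1)})$, not onto $B'$, so it contributes nothing to the fiber over a general $q\in B'$, and ``propagating $D_j$'s contribution back over the generic point of $B_0$ via flatness'' is not a valid inference. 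Likewise the bookkeeping in your last paragraph --- adding $\deg(\pi|_{C^\nu})$ to the degrees of the subcurves attached at the points $n_j^{(1)}$ --- is not a legitimate lower bound for $\deg(\pi)$: degrees of components of $C'$ sum to $\deg(\pi)$ only when those components lie over one and the same component of $B$, which is exactly what is in question here. You also misidentify the main obstacle: pairwise distinctness of the images $\pi(n_j^{(1)})$ is never needed.

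The missing idea, which is the paper's actual argument, is to use the contraction globally rather than the local picture at each node: for $j$ with $\pi(n_j^{(1)})\neq\pi(n_j^{(2)})$ set $W_j:=\tau^{-1}(n_j)$. This is a connected rational subcurve of $C'$ containing \emph{both} branches $n_j^{(1)}$ and $n_j^{(2)}$, so $\pi(W_j)$ is a connected subcurve of the genus-$0$ curve $B$ containing two distinct points of the line $B'$; since the dual graph of $B$ is a tree, $\pi(W_j)$ must therefore contain all of $B'$. Hence for general $q\in B'$ the fiber $\pi^{-1}(q)$ contains $\deg(\pi|_{C^\nu})$ points of $C^\nu$ (with multiplicity) plus at least one point of each such $W_j$; the $W_j$ are pairwise disjoint (preimages of distinct nodes) and meet $C^\nu$ only at the branch points, so these contributions are distinct and the inequality follows. (Your final worry about $C^\nu\to B'$ being constant cannot occur, since $\pi$ is finite.) As written, your proof replaces this connectedness-plus-tree argument by local data at the nodes $n_j^{(i)}$ together with invalid degree propagation, so the inequality is not established.
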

\begin{proof}
By \cite[Lemma 3.2]{paper1}, $C^\nu$ can be considered as a component of $C'$.
Set 
$B'=\pi(C^\nu)$ and let $k'$ be the degree of $\pi|_{C^\nu}\:C^\nu\rightarrow B'$.  

Now, for each $1\leq j\leq \delta$ let $W_j:=\tau^{-1}(n_j)$,
where $\tau\:C'\rightarrow C$ is the contraction map.
Then $W_j$ is a rational subcurve of $C'$ containing both $n_j^{(1)}$ and $n_j^{(2)}$
and thus $\pi(W_j)$ contains both $\pi(n_j^{(1)})$ and $\pi(n_j^{(2)})$. 

If
$\pi_1(n_j^{(1)})\neq \pi_2(n_j^{(2)})$, 
since $\pi(W_j)$ contains two distinct points $\pi(n_j^{(1)})$ and $\pi(n_j^{(2)})$ on $B'$, then it must contain the entire line $B'$.
So, for a general point $q\in B'$, $\pi^{-1}(q)$ contains at least one point on $W_j$.
Therefore $\pi^{-1}(q)$ contains $k'$ points on $C^\nu$, counted with multiplicity, and at least one point
on $W_j$ for each $1\leq j\leq \delta$ such that $\pi_1(n_j^{(1)})\neq \pi_2(n_j^{(2)})$, and the result is proven.
\end{proof}

\begin{lemma}\label{lemaB1B2}
Let $C$ be a nodal curve with irreducible smooth components $C_1$ and $C_2$ and nodes $n_1,\ldots,n_\delta$. 
For $i=1,2$ let $n_j^{(i)}$ be the branch of $n_j$ at $C_i$, for $j=1,\ldots,\delta$.
Let $\pi\:C'\rightarrow B$ be an admissible cover, where $C'$ is stably equivalent to $C$. 
\begin{enumerate}[i.]
\item If $\pi(C_1)\neq\pi(C_2)$ then $\deg(\pi)\geq \delta$.

\item If $\pi(C_1)=\pi(C_2)$, let $\ell$ be the cardinality of the set
$$\{1\leq j\leq \delta \ | \ \pi(n_j^{(1)})\neq \pi(n_j^{(2)})\}.$$
Then $\deg(\pi)\geq \deg(\pi|_{C_1})+\deg(\pi|_{C_2})+\ell$.
\end{enumerate}
\end{lemma}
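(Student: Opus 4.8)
The structure mirrors the proof of Lemma \ref{lemairred}, now with two components. First I would record the basic setup: since $C_1$ and $C_2$ are smooth, they are components of $C'$ by \cite[Lemma 3.2]{paper1}, and for each node $n_j$ the preimage $W_j := \tau^{-1}(n_j)$ under the contraction $\tau\colon C'\to C$ is a rational chain joining the branch points $n_j^{(1)}\in C_1$ and $n_j^{(2)}\in C_2$; hence $\pi(W_j)$ is a connected curve in $B$ containing both $\pi(n_j^{(1)})$ and $\pi(n_j^{(2)})$. Set $B_i := \pi(C_i)$; these are lines in the genus-$0$ curve $B$.

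For part (i), suppose $B_1 \ne B_2$. The two lines meet in at most one point of $B$ (or in a node of $B$), so for each $j$ the connected set $\pi(W_j)$ must bridge from $B_1$ to $B_2$, forcing it to pass through the point(s) where $B_1$ meets $B_2$ along the dual graph of $B$ — in particular $\pi(W_j)$ contains a fixed point $q_0 \in B_1 \cap (\text{rest of } B)$ independent of $j$ if $B_1$ is a leaf, or more generally $\pi(W_j)$ contains every line on the unique path in $B$ from $B_1$ to $B_2$. I would then pick a general point $q$ on one such intermediate line (or argue directly on $B_1$): over $q$, admissibility condition (1) forces the preimage to meet each $W_j$, and the $W_j$ are pairwise disjoint off $C_1\cup C_2$, so $\pi^{-1}(q)$ contains at least $\delta$ distinct points, giving $\deg(\pi)\ge\delta$. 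The cleanest phrasing may be: restrict attention to $B_1$; since $\pi|_{C_1}$ is nonconstant, a general $q\in B_1$ has $\deg(\pi|_{C_1})\ge 1$ preimages on $C_1$, and because each $W_j$ attaches to $C_1$ at $n_j^{(1)}$ and $\pi(W_j)\ne\{q\}$ generically, condition (1) applied at the node $n_j^{(1)}$ shows $\pi^{-1}(q)$ also meets $W_j$; summing over $j$ and over $C_1$ itself yields the bound, and symmetry handles the case where some $W_j$ fails to meet $B_1$.

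For part (ii), assume $B_1 = B_2 =: B'$, with $\pi|_{C_i}$ of degree $k_i = \deg(\pi|_{C_i})$. Exactly as in Lemma \ref{lemairred}: for each $j$ with $\pi(n_j^{(1)})\ne\pi(n_j^{(2)})$, the connected curve $\pi(W_j)$ contains two distinct points of the line $B'$, hence contains all of $B'$; so for general $q\in B'$ the fiber $\pi^{-1}(q)$ meets $W_j$. These $W_j$ are disjoint from $C_1$, from $C_2$, and from one another away from $C_1\cup C_2$, so $\pi^{-1}(q)$ has at least $k_1$ points on $C_1$, at least $k_2$ points on $C_2$, and at least one further point on each of the $\ell$ relevant $W_j$'s, all distinct. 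Therefore $\deg(\pi)=\#\pi^{-1}(q)\ge k_1+k_2+\ell$.

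The main obstacle is part (i): making rigorous the claim that $\pi(W_j)$ must contain a single point (or line) common to all $j$, so that the $\delta$ contributions to the fiber are genuinely distinct and not absorbed into the $\pi^{-1}(q)\cap C_i$ counted already. This requires a careful look at the dual tree of the genus-$0$ curve $B$ and at how the branches of $C'$ over a node of $B$ behave under admissibility condition (4); I would handle it by choosing $q$ to be a general \emph{smooth} point of whichever line in $B$ lies on the path from $B_1$ to $B_2$ and is hit by the most $W_j$'s, then falling back on the symmetric count on $C_2$ for any $W_j$ not reaching that line.
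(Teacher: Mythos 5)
Your part (ii) is correct and is exactly the paper's argument, so no issues there. The problem is part (i), and it is precisely the obstacle you flag at the end: your plan does not actually close it. The claim ``over a general $q$ the preimage meets each $W_j$'' is false in general. What is true is that each $\pi(W_j)$, being a connected union of irreducible components of $B$ containing a point of $B_1$ and a point of $B_2$, must pass through the unique attaching point $q_i\in B_i$ towards the other component (this is the content of \cite[Lemma 9]{paper2}); but if $\pi(n_j^{(i)})=q_i$, then $\pi(W_j)$ need not contain $B_i$ at all, so the fiber over a general point of $B_i$ may miss that $W_j$. Your first fallback (take $q$ on an intermediate line) does work when $B_1\cap B_2=\emptyset$, since every strictly intermediate component lies in every $\pi(W_j)$ by connectivity of the dual tree; but when $B_1$ and $B_2$ meet at a node there is no intermediate line, and this is a genuine case. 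Your second fallback (``symmetry handles the $W_j$ that fail to meet $B_1$'') does not work either: counting over $B_1$ gives $\deg(\pi)\geq k_1+|S_1|$ and counting over $B_2$ gives $\deg(\pi)\geq k_2+|S_2|$, where $S_i$ is the set of $j$ with $B_i\subseteq\pi(W_j)$, and since these are counts in two different fibers you cannot add them; at best you get $\deg(\pi)\geq(\delta+k_1+k_2)/2$, which is weaker than $\delta$.

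The missing idea, which is how the paper closes the gap, is a compensation count on $C_i$ itself: the branches $n_1^{(i)},\ldots,n_\delta^{(i)}$ are $\delta$ distinct points of $C_i$, and at most $k_i=\deg(\pi|_{C_i})$ of them can map to $q_i$, so $\pi(W_j)\supseteq B_i$ for at least $\delta-k_i$ indices $j$. A general $q\in B_i$ then has $k_i$ preimages on $C_i$ (counted with multiplicity) plus at least one preimage on each of those $\delta-k_i$ curves $W_j$, all disjoint from $C_i$ and from each other, giving $\deg(\pi)\geq k_i+(\delta-k_i)=\delta$. In other words, the $W_j$'s that may be ``lost'' are exactly those attached over $q_i$, and there are at most $k_i$ of them, which is exactly what the $k_i$ points of the fiber lying on $C_i$ make up for; without this observation the bound $\deg(\pi)\geq\delta$ does not follow from your counts.
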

\begin{proof} 
For each $1\leq j\leq \delta$ let $W_j:=\tau^{-1}(n_j)$,
where $\tau\:C'\rightarrow C$ is the contraction map.
Then $W_j$ is a rational subcurve of $C'$ containing both $n_j^{(1)}$ and $n_j^{(2)}$
and thus $\pi(W_j)$ contains both $\pi(n_j^{(1)})$ and $\pi(n_j^{(2)})$. 
For each $i=1,2$, let $k_i$ be the degree of $\pi_i:=\pi|_{C_i}$
and set 
$B_i=\pi(C_i)$.

First we prove (i), so assume $B_1\neq B_2$.
By \cite[Lemma 9]{paper2}, either $B_1\cap B_2$ is a point or $B_0=B_1^c\cap B_2^c$ is a rational chain and $B_i\cap B_0$ is a unique point. In any case, we thus determine a unique point $q_i\in B_i$.

For each $1\leq j\leq \delta$ such that $n_j^{(i)}\neq q_i$,
the image $\pi(W_j)$ contains $B_i$.
Indeed,  $\pi(W_j)$ 
contains $\pi(n_j^{(1)})\in B_1$ and $\pi(n_j^{(2)})\in B_2$ and hence it must also contain  $q_i$. 
Since $\pi(W_j)$ contains two distinct points of $B_i$, namely $\pi(n_j^{(i)})$ and $q_i$, it must contain the entire line $B_i$.

Note that there are at most $k_i$ indexes $j$ such that $\pi(n_j^{(i)})=q_i$, 
that is, there at least $\delta-k_i$ indexes $j$ such that $\pi(n_j^{(i)})\neq q_i$.
Moreover, 
for a general point $q\in B_i$, 
$\pi^{-1}(q)$ contains $k_i$ points on $C_i$, counted with multiplicity, and 
at least one pont
on $W_j$ for each $1\leq j\leq \delta$ such that $\pi(n_j^{(i)})\neq q_i$.
Hence
$\deg(\pi)\geq k_i+\delta-k_i=\delta$.

Now we show (ii), so assume $B_1=B_2$.
Let $1\leq j\leq \delta$ be such that $\pi_1(n_j^{(1)})\neq \pi_2(n_j^{(2)})$.
Since $\pi(W_j)$ contains two distinct points $\pi_1(n_j^{(1)})$ and $\pi_2(n_j^{(2)})$ on $B_i$, then it must contain the entire line $B_i$.
So, for a general point $q\in B_i$, $\pi^{-1}(q)$ contains at least one point on $W_j$.
Therefore $\pi^{-1}(q)$ contains $k_1$ points on $C_1$ and $k_2$ points on $C_2$, counted with multiplicity, and at least one point
on $W_j$ for each $1\leq j\leq \delta$ such that $\pi_1(n_j^{(1)})\neq \pi_2(n_j^{(2)})$, and the result is proven.
\end{proof}

The following result is a slight improvement upon \cite[Proposition 3.11]{paper1}, and will be used in Subsection \ref{sectionbinary}.

\begin{proposition}\label{lemaprop311}
Let $C$ be a stable curve with $p$ smooth irreducible components $C_1,\ldots,C_p$.
For each node $n_j$ of $C$, let $C_{j(1)}$ and $C_{j(2)}$ be the irreducible components of $C$ containing $n_j$ and denote by $n_j^{(1)}$ and $n_j^{(2)}$ the branches of $n_j$ on $C_{j(1)}$ and $C_{j(2)}$, respectively. 
Assume that there is a degree-$k_s$ map $\pi_s\:C_s\rightarrow \mathbb P^1$ for each component $C_s$ of $C$ such that 
$\pi_{j(1)}(n_j^{(1)}) = \pi_{j(2)}(n_j^{(2)})$ for every node $n_j$.
Then $C$ is $(k_1+\ldots+k_p)$-gonal.
\end{proposition}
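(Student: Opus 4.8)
The plan is to invoke Theorem~\ref{HarrMum} by exhibiting a $k$-sheeted admissible cover $\pi\:C'\to B$ with $C'$ stably equivalent to $C$, where $k:=k_1+\cdots+k_p$. Write $P\cong\mathbb P^1$ for the common target of the maps $\pi_s$, let $C^\nu=\coprod_s C_s$ be the normalization of $C$, and let $\varphi\:C^\nu\to P$ be the morphism restricting to $\pi_s$ on each $C_s$; it is finite of degree $\sum_s k_s=k$. For each node $n_j$ set $b_j:=\pi_{j(1)}(n_j^{(1)})=\pi_{j(2)}(n_j^{(2)})\in P$. One cannot simply take $B=P$ and $C'=C^\nu$, because $P$ is smooth while $C$ is not; instead I would sprout rational curves out of $P$, and correspondingly out of $C^\nu$, over the points that must become singular.

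Let $\Sigma\subset P$ be the finite set consisting of the points $b_j$ together with all points over which $\varphi$ is not simply branched. Attach a smooth rational tail $T_b$ to $P$ at each point $b\in\Sigma$; write $B_0$ for this copy of $P$ and set $B:=B_0\cup\bigcup_{b\in\Sigma}T_b$, a nodal curve of genus $0$. Over $B_0$ the cover is $\varphi\:C^\nu\to B_0$. Over $T_b$ it is assembled from:
\begin{enumerate}[(i)]
\item for each node $n_j$ with $b_j=b$, a rational bridge $W_j\cong\mathbb P^1$ glued to $C_{j(1)}$ at $n_j^{(1)}$ and to $C_{j(2)}$ at $n_j^{(2)}$, mapped to $T_b$ by a morphism of degree $e_j^{(1)}+e_j^{(2)}$ whose fibre over the node $B_0\cap T_b$ is the pair of gluing points with multiplicities $e_j^{(1)},e_j^{(2)}$ (here $e_j^{(i)}$ is the ramification index of $\pi_{j(i)}$ at $n_j^{(i)}$), simply branched away from that node;
\item for each remaining point $q$ of $\varphi^{-1}(b)$, a rational tail $R_q\cong\mathbb P^1$ glued to the component of $C^\nu$ through $q$ at $q$, mapped to $T_b$ by a morphism of degree equal to the ramification index of $\varphi$ at $q$, totally ramified over $B_0\cap T_b$ and simply branched elsewhere.
\end{enumerate}
Such self-maps of $\mathbb P^1$ exist by the Riemann existence theorem, and I would take the remaining branch points pairwise distinct.

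The crucial bookkeeping is that $\pi$ again has degree $k$ over each $T_b$: the fibre $\varphi^{-1}(b)$ has $k$ points counted with multiplicity, and the degrees assigned in (i) and (ii) add up precisely to that number. So $\pi\:C'\to B$ is finite of degree $k$. Contracting each $W_j$ (which meets the rest of $C'$ in $n_j^{(1)}$ and $n_j^{(2)}$) restores the node $n_j$, and contracting each $R_q$ (which meets the rest in the single point $q$) deletes it, leaving $C$; hence $C'$ is stably equivalent to $C$. The admissible-cover axioms hold by construction: $B^{\mathrm{sing}}=\Sigma$, and by the choice of the $R_q$ \emph{every} point of $C'$ over $\Sigma$ is a node of $C'$ while nothing else maps into $\Sigma$, giving (1); over $B_0\smallsetminus\Sigma$ the map $\varphi$ is simply branched by definition of $\Sigma$, and over the smooth locus of each $T_b$ the maps of (i) and (ii) were chosen simply branched with distinct branch points, giving (2); at each node of $C'$ the two branches map to the two branches of the corresponding node of $B$ with equal ramification index, by the way the bridges and tails were built, giving (4); and the stability of the pointed curve $B$, i.e.\ (3), follows from the stability of $C$ and Riemann--Hurwitz (each $T_b$ picks up at least two branch points from the bridge or tails over it, and $B_0$ carries enough special points since every rational component of the stable curve $C$ meets the rest in at least three points). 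By Theorem~\ref{HarrMum} this shows $C$ is $k$-gonal.

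The crux is axiom (1): reconciling the near-smoothness of $B$ with the nodes of $C$. Putting a tail $T_{b_j}$ over each node image is forced, but then the \emph{other} points of $\varphi^{-1}(b_j)$ — which are smooth points of $C^\nu$ — come to lie over a node of $B$ and would break (1) unless they too are blown up into nodes via the tails $R_q$; that this remains compatible with keeping the total degree equal to $k$ is exactly the length identity above. This follows the strategy of \cite[Proposition 3.11]{paper1}, the extra point being that the $\pi_s$ may ramify arbitrarily, at the nodes and elsewhere, which is what forces the general shapes of the $W_j$ and $R_q$; target stability is then a routine check.
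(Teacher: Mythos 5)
Your construction is correct, but it takes a genuinely different route from the paper. The paper deduces the proposition from two results of \cite{paper1}: it first picks one node $n_{j(q)}$ for each point $q$ in the set of node images, applies \cite[Proposition 3.11]{paper1} to $C_1,\ldots,C_p$ and this subset of nodes (so that the nodes used have pairwise distinct images) to obtain a cover of the partial normalization $C_J$ restricting to the $\pi_s$, and then re-introduces the remaining nodes one at a time by induction, using \cite[Theorem 3.4 (b)]{paper1} to attach a node whenever two points of the cover lie over the same point of the base. You instead build the admissible cover in a single step from scratch: tails $T_b$ on the target at the node images and at the non-simply-branched points, a bridge $W_j$ of degree $e_j^{(1)}+e_j^{(2)}$ over each node and a tail $R_q$ over each remaining preimage, with ramification profiles matched so that conditions (1), (2) and (4) hold and the degree over every tail is again $k$; contracting the $W_j$ and $R_q$ recovers $C$. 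This amounts to a self-contained reproof of the quoted machinery, handling coinciding node images and arbitrary ramification at the nodes directly; it buys independence from \cite{paper1} and a transparent picture of the cover, at the price of verifying the admissible-cover axioms yourself. Two points you leave terse are genuinely checkable but deserve a line each: the existence of the maps $W_j\to T_b$ and $R_q\to T_b$ with the stated profile over the node, simple branching elsewhere and branch points movable off a finite set (Riemann existence, then compose with automorphisms of $T_b$ fixing the attachment point), and the stability of $B_0$ in condition (3). For the latter, the clean argument is that the set $T$ consisting of the node images and the branch points of $\varphi$ has at least three elements: a connected cover of $\mathbb P^1$ branched over at most two points is rational with at most two points over those two, so if $|T|\leq 2$ every component $C_s$ would be rational and $\pi_s^{-1}(T)$ would have at most two points, contradicting the stability of $C$ (each rational component carries at least three node branches, all mapping into $T$), while if $C$ is smooth its genus forces at least three branch points.
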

\begin{proof} 
Let $n_1,\ldots,n_\delta$  be the nodes of $C$ and 
let 
$$Q=\{\pi_{j(1)}(n_j^{(1)})\ | \ j=1,\ldots,\delta\}.$$
For each $q\in Q$, 
fix one $1\leq j(q)\leq \delta$
such that $\pi_{j(q)(1)}(n_{j(q)}^{(1)})=q$.

Let 
$J=\{1,\ldots,\delta\}\setminus \{j(q)\ | \ q\in Q\}$
and let $C_J$ be the curve
obtained by normalizing $C$ at the nodes $n_j$ such that 
$j\in J$.
Note that $C_J$ may be disconnected. 
However, 
applying \cite[Proposition 3.11]{paper1} to $C_1,\ldots, C_p$ and the nodes $n_{j(q)}$ for $q\in Q$, we obtain a finite map $\pi_J'\:C_J'\rightarrow B$ of degree $k_1+\ldots+k_p$, where $C_J'$ is stably equivalent to $C_J$, satisfying conditions 1 to 4 of the definition of an admissible covering.
By construction, $\pi'_J|_{C_s}=\pi_s$ for every component $C_s$ of $C$.


Furthermore, it is easy to see that \cite[Proposition 3.4]{paper1} can be applied to a map such as $\pi_J$, even if $C_J'$ is not connected. 
We thus proceed in the following manner.
Write $J=\{j_1,\ldots,j_r\}$ and
for each $\ell=1,\ldots,r$ let $C(\ell)$ be the curve obtained by normalizing $C$ at the nodes 
$n_{j_\ell},\ldots,n_{j_r}$, and  let $C({r+1})=C$.
We will show by induction that for every $1\leq\ell\leq r+1$
there is a finite map $\pi'_\ell\:C'(\ell)\rightarrow B_\ell$ 
of degree $k_1+\ldots+k_p$ satisfying conditions 1 to 4 of the definition of an admissible covering, and 
such that $C'(\ell)$ is stably equivalent to $C(\ell)$ and $\pi'_\ell|_{C_s}=\pi_s$ for every component $C_s$ of $C$. 
Note that $\pi'_J$ satisfies the statement for $\ell=1$.

Assume that the statement holds for $\ell-1$.
Choose points  
$\nlinha{1}_{j_\ell}$ and $\nlinha{2}_{j_\ell}$ on $C'(\ell-1)$ lying over 
$n_{j_\ell}^{(1)}$ and $n_{j_\ell}^{(2)}$, respectively, such that
$\pi'_{\ell-1}(\nlinha{1}_{j_\ell})=\pi'_{\ell-1}(\nlinha{2}_{j_\ell})$. 
Then \cite[Theorem 3.4 (b)]{paper1} gives the required admissible cover $\pi'_\ell$ as stated.

The case $\ell=r+1$ shows that $C$ is $(k_1+\ldots+k_p)$-gonal.
\end{proof}

\subsection{Maps between $\mathbb P^1$}\label{sec:mapsp1}

In the following section we will study the gonality of nodal curves whose nornalizations are one or two copies of $\mathbb P^1$. For that end, we will first examine endomorphisms of $\mathbb P^1$. 

Recall that set of non-constant morphisms $\varphi\:\mathbb P^1 \rightarrow \mathbb P^1$ of degree at most $k$ can be identified with the Grassmanian $\mathcal G_k:=Gr(2,H^0(\mathbb P^1,\Ocal_{\mathbb P^1}(k)))$ of $2$-dimensional subspaces of $H^0(\mathbb P^1,\Ocal_{\mathbb P^1}(k))$.
Indeed, such a morphism corresponds to a linear system of degree $k$ and (projective) dimension 1, which, up to change of projective coordinates, corresponds to
a linear subspaces of $H^0(\mathbb P^1,\Ocal_{\mathbb P^1}(k))$ of dimension 2. 
Since $H^0(\mathbb P^1,\Ocal_{\mathbb P^1}(k))$ is a vector space of dimension $k+1$, we get in particular that $\dim(\mathcal G_k)=2k-2$.

\begin{lemma}\label{lemamapsP1}
Let $k,\ell\geq 1$ be integers such that $\ell\leq 2k-2$ and 
let  $n^{(i)}_1,\ldots,n^{(i)}_\ell\in \mathbb P^1$, for $i=1,2$.
\begin{enumerate}[i.]
\item There exists $\varphi\in \mathcal G_k$ such that 
$\varphi(n_j^{(1)})=n_j^{(2)}$ for every $j=1,\ldots,\ell$.

\item There exists $\varphi\in \mathcal G_k$ such that 
$\varphi(n_j^{(1)})=\varphi(n_j^{(2)})$ for every $j=1,\ldots,\ell$.
\end{enumerate}
\end{lemma}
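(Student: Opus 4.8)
The plan is to represent a non-constant morphism $\mathbb{P}^1\to\mathbb{P}^1$ of degree at most $k$ by a pair $(P,Q)$ of polynomials of degree $\le k$ in an affine coordinate $t$ — equivalently, by a point $\langle P,Q\rangle$ of $\mathcal{G}_k=Gr(2,H^0(\mathbb{P}^1,\mathcal{O}_{\mathbb{P}^1}(k)))$, which has dimension $2k-2$ — and to produce the required $\varphi$ by comparing the number of free parameters with the number of imposed conditions. Throughout I assume the $n^{(i)}_1,\dots,n^{(i)}_\ell$ are pairwise distinct for $i=1,2$ (so that in part (i), when $\ell\ge2$, the $n^{(2)}_j$ are not all equal; the case $\ell=1$ of (i) is settled by a Möbius transformation), and I ignore points lying at $\infty$, which are handled by an obvious change of coordinates.

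For part (i) I would work directly in the $2(k+1)$-dimensional space of coefficient vectors of $(P,Q)$. Each requirement $\varphi(n^{(1)}_j)=n^{(2)}_j$ is the single linear equation $P(n^{(1)}_j)-n^{(2)}_j\,Q(n^{(1)}_j)=0$, so the solutions form a linear subspace $V$ with $\dim V\ge 2(k+1)-\ell\ge4$. It then remains to find in $V$ a pair with $P,Q$ linearly independent and with no $n^{(1)}_j$ a common zero of $P$ and $Q$, for then the associated $\varphi$ is a non-constant morphism of degree $\le k$ actually sending $n^{(1)}_j$ to $n^{(2)}_j$. For non-constancy: the linearly dependent pairs form, inside $\mathbb{P}\bigl(H^0(\mathcal{O}(k))^{\oplus2}\bigr)$, the Segre variety $\mathbb{P}^1\times\mathbb{P}^k$, whose linear subspaces of dimension $\ge2$ all lie in a single ruling $\{c\}\times\mathbb{P}^k$; since a point of $\mathbb{P}(V)$ in such a ruling would represent the constant morphism $c$ and hence force $n^{(2)}_j=c$ for all $j$, and $\dim\mathbb{P}(V)\ge3$, the space $\mathbb{P}(V)$ is not contained in the Segre.

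For part (ii) the condition $\varphi(n^{(1)}_j)=\varphi(n^{(2)}_j)$ is unchanged by post-composing $\varphi$ with an automorphism of the target, so it descends to $\mathcal{G}_k$: for $\varphi=\langle P,Q\rangle$ it reads
$$P(n^{(1)}_j)\,Q(n^{(2)}_j)-P(n^{(2)}_j)\,Q(n^{(1)}_j)=0,$$
and expanding $P$ and $Q$ in the monomial basis one sees that the left-hand side is a linear functional in the Plücker coordinates of $\langle P,Q\rangle$, not identically zero precisely because $n^{(1)}_j\ne n^{(2)}_j$. Hence, in the Plücker embedding of $\mathcal{G}_k$, the condition means that $\langle P,Q\rangle$ lies on a fixed hyperplane $H_j$. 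Since $\dim\mathcal{G}_k=2k-2\ge\ell$, the intersection $\mathcal{G}_k\cap H_1\cap\dots\cap H_\ell$ is non-empty; a point $W$ of it is a genuine $2$-dimensional subspace, hence gives a non-constant $\varphi\in\mathcal{G}_k$, and as long as none of the finitely many points $n^{(i)}_j$ is a base point of $W$, the vanishing of the determinants above forces $\varphi(n^{(1)}_j)=\varphi(n^{(2)}_j)$.

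The bookkeeping with coefficients and with points at infinity is routine, and the dimension counts are immediate. The real obstacle in both parts is the non-degeneracy at the end — passing from ``the system has a solution'' to ``it has a solution that is an honest degree-$\le k$ morphism taking the prescribed values at the prescribed points''. For (i) this is the Segre computation together with the hypothesis that the $n^{(2)}_j$ are not all equal (plus an easy check that $V$ is not contained in the locus where some $n^{(1)}_j$ is a base point); for (ii) it is the verification that the pencil produced by the dimension count can be taken base-point-free at the $n^{(i)}_j$. When $\ell<2k-2$ the relevant intersection is positive-dimensional and the base-point loci — sub-Grassmannians of codimension $2$ — are too small to fill it; the borderline case $\ell=2k-2$ needs a small extra argument, e.g.\ choosing the common point of the $H_j$ away from those finitely many loci.
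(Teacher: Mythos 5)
Your overall strategy is the same as the paper's: encode each condition as a linear (resp.\ Pl\"ucker-linear) section of the space of pencils and use $\ell\leq 2k-2$ together with projectivity to get a nonempty intersection; the paper's own proof then simply asserts that any point of the intersection gives the desired $\varphi$. You correctly isolate the real difficulty --- a point of the intersection may be a degenerate pencil, e.g.\ with a base point at one of the marked points, and then it does not induce a morphism with the prescribed behaviour --- but you do not actually dispose of it, and at this level of generality it cannot be disposed of. The codimension count ``base-point loci are sub-Grassmannians of codimension $2$, too small to fill the intersection'' is not a valid argument: if $p\in\{n_j^{(1)},n_j^{(2)}\}$ then the locus of pencils with base point at $p$ is \emph{contained} in the hypersurface $H_j$, so inside $X=\mathcal G_k\cap H_1\cap\dots\cap H_\ell$ it is cut out only by the remaining $\ell-1$ conditions and can perfectly well contain whole components of $X$; and in the borderline case $\ell=2k-2$ there is nothing left to ``choose away from'' the bad loci, since $X$ can be entirely bad. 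Indeed part (i) is false for special (pairwise distinct) configurations in exactly this range: take $k=5$, $\ell=8$, let $\mu$ be a M\"obius transformation, set $n_j^{(2)}=\mu(n_j^{(1)})$ for $j=1,\dots,7$ and pick $n_8^{(2)}\neq\mu(n_8^{(1)})$ (distinct from the other values). If $\varphi=[P:Q]$ has degree at most $5$ and agrees with $\mu=[A:B]$ at the seven points $n_1^{(1)},\dots,n_7^{(1)}$, then $PB-QA$ is a polynomial of degree at most $6$ with seven distinct roots, hence vanishes identically and $\varphi=\mu$, contradicting the eighth condition; so no admissible $\varphi$ exists, even though your linear space $V$ is nonempty of dimension at least $4$ --- every point of it is degenerate. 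The same construction, with $\mu$ a rational map of degree at most $\ell-k-2$, gives counterexamples whenever $k+3\leq\ell\leq 2k-2$.

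So the step you defer as ``an easy check'' or ``a small extra argument'' is precisely where the proof must live, and it cannot be completed without extra hypotheses: one needs either $\ell$ small enough for direct interpolation (roughly $\ell\leq k+1$) or a genericity assumption on the points, under which one can bound the bad locus inside $X$ properly (or argue by specialization from a single good configuration). To be fair, the paper's one-line proof silently skips the identical issue (and for part (i) its hypersurfaces are not even well defined on $\mathcal G_k$, since the condition $\varphi(n_j^{(1)})=n_j^{(2)}$ is not invariant under changing coordinates on the target, which is why you rightly work in the space of pairs there); your write-up is more careful in flagging the problem, but as it stands it is not a proof, and for part (i) with $\ell$ close to $2k-2$ no proof of the statement as written exists. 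For part (ii) I do not have a counterexample, but your base-point-freeness argument there is incomplete for the same reason.
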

\begin{proof}
For $j=1,\ldots,\ell$ we let
$V_j$ (resp. $W_j$) be the set of $\varphi\in\mathcal G_k$ such that 
$\varphi(n_j^{(1)})=n_j^{(2)}$ (resp. $\varphi(n_j^{(1)})=\varphi(n_j^{(2)})$). 
Then $V_j$ (resp. $W_j$) is a hyperplane (resp. quadric hypersurface) in $\mathcal G_k$. 
Since $\ell\leq \dim(\mathcal G_k)$,  the intersection $V_1\cap\cdots\cap V_\ell$ 
(resp. $W_1\cap\cdots\cap W_\ell$) 
is non-empty and any point on this intersection gives the desired $\varphi$, thus showing (i) (resp. (ii)).
\end{proof}

\section{Results on gonality}\label{sec:main}


In \cite{paper2}, the gonality of two-component nodal curves is studied, focusing on the case where both components are non-rational. In this work we study the case of nodal curves whose normalization are one (Subsection \ref{sectionirred}) or two (Subsection \ref{sectionbinary}) copies of $\mathbb P^1$.

\subsection{Irreducible curves of genus $g$ with $g$ nodes}\label{sectionirred}

In this subsection we study the gonality of irreducible curves $C$ having $g(C)$ nodes. 
We note that the normalization of such a curve is $\mathbb P^1$. 
Indeed, the curve $C$ is obtained from $\mathbb P^1$ by identifying $g(C)$ pairs of distinct points on $\mathbb P^1$.

\begin{center}
\[
\xy
(0,50)*{}="A";
(0,0)*{}="B";
"A"; "B" **\crv{(-8,40) & (10,30)& (13,35) & (10,40) & (-8,30) & (10,20)& (13,25) & (10,30) & (-8,20) & (10,10)& (13,15) & (10,20) & (-8,10)};
(2,37)*{\mbox{\footnotesize $n_1$}};
(2,27)*{\mbox{\footnotesize $n_2$}};
(2,17)*{\mbox{\footnotesize $n_3$}};
(2,-2)*{\mbox{\footnotesize $C$}};
(-30,50)*{}="C";
(-30,0)*{}="D";
"C"; "D" **\dir{-};
(-30,43)*{-};
(-33,44)*{\mbox{\footnotesize $n_1^{(1)}$}};
(-30,36)*{-};
(-33,37)*{\mbox{\footnotesize $n_1^{(2)}$}};
(-30,28)*{-};
(-33,29)*{\mbox{\footnotesize $n_2^{(1)}$}};
(-30,21)*{-};
(-33,22)*{\mbox{\footnotesize $n_2^{(2)}$}};
(-30,13)*{-};
(-33,14)*{\mbox{\footnotesize $n_3^{(1)}$}};
(-30,6)*{-};
(-33,7)*{\mbox{\footnotesize $n_3^{(2)}$}};
{\ar^{\nu} (-20,25); (-10,25)};
(-28,-2)*{\mbox{\footnotesize $C^{\nu}=\mathbb{P}^1$}};
\endxy
\]
An irreducible curve of genus 3 with 3 nodes.
\end{center}

\smallskip

Denote by 
$I_g$ the  locus  in $\overline M_g$ of irreducible curves of genus $g$ with $g$ nodes.

\begin{theorem}\label{thmirred}
Let $C$ be an irreducible curve with $g(C)$ nodes. 
Then 
$$\gon(C)\leq \left\lceil\dfrac{g(C)+3}{2}\right\rceil$$
and equality holds if $C$ is general in $I_g$. 
\end{theorem}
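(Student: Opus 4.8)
The plan is to establish the upper bound by exhibiting an explicit admissible cover, and then to establish equality for a general member of $I_g$ by a dimension count together with the lower bounds from Lemma \ref{lemairred}.

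\textbf{Upper bound.} Write $C = C^\nu/\!\sim$ where $C^\nu = \mathbb P^1$ and the $g$ nodes $n_1,\dots,n_g$ come from identifying pairs of points $n_j^{(1)},n_j^{(2)}\in\mathbb P^1$. Set $k=\left\lceil\frac{g+3}{2}\right\rceil$, so that $2k-2\geq g$ (indeed $2k-2 \geq g+1$ when $g$ is odd and $=g$ when... one checks the parity carefully: $2\lceil (g+3)/2\rceil - 2 \geq g$ holds in all cases). By Lemma \ref{lemamapsP1}(ii) applied with $\ell = g \leq 2k-2$, there is a morphism $\varphi\:\mathbb P^1\to\mathbb P^1$ of degree at most $k$ with $\varphi(n_j^{(1)})=\varphi(n_j^{(2)})$ for all $j$. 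After composing with a general automorphism of the target, or by a small perturbation inside $\mathcal G_k$, we may assume $\varphi$ has degree exactly $k$ and that $\varphi$ is simply branched; if $k > g$ one instead applies Proposition \ref{lemaprop311} directly. Now $C$ has one smooth component $\mathbb P^1$ and the map $\pi_1 := \varphi$ satisfies $\pi_1(n_j^{(1)})=\pi_1(n_j^{(2)})$ for every node, so Proposition \ref{lemaprop311} (with $p=1$) shows that $C$ is $k$-gonal. This gives $\gon(C)\leq\left\lceil\frac{g+3}{2}\right\rceil$ for \emph{every} irreducible $C$ with $g$ nodes.

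\textbf{Lower bound for general $C$.} Suppose $C\in I_g$ is general and $\pi\:C'\to B$ is a $k$-sheeted admissible cover with $C'$ stably equivalent to $C$. By Lemma \ref{lemairred}, $C^\nu=\mathbb P^1$ is a component of $C'$, and writing $k'=\deg(\pi|_{C^\nu})$ and $\ell = \#\{j : \pi(n_j^{(1)})\neq\pi(n_j^{(2)})\}$ we get $k\geq k'+\ell$. The restriction $\varphi:=\pi|_{C^\nu}\colon\mathbb P^1\to B'\cong\mathbb P^1$ lies in $\mathcal G_{k'}$, which has dimension $2k'-2$. For the $g-\ell$ indices $j$ with $\pi(n_j^{(1)})=\pi(n_j^{(2)})$, the pair $(n_j^{(1)},n_j^{(2)})$ imposes one condition on $\varphi\in\mathcal G_{k'}$ (the hyperquadric $W_j$ of Lemma \ref{lemamapsP1}). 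The key point is that for a \emph{general} choice of the $2g$ points $n_j^{(i)}\in\mathbb P^1$, these conditions are independent, so such a $\varphi$ exists only if $g-\ell\leq 2k'-2$. Combined with $k\geq k'+\ell$ this yields $2k \geq 2k' + 2\ell \geq (g-\ell+2) + 2\ell = g+2+\ell \geq g+2$, hence $k\geq\lceil(g+2)/2\rceil$; a parity refinement (treating $\ell=0$ versus $\ell\geq 1$, and noting $k$ is an integer) upgrades this to $k\geq\lceil(g+3)/2\rceil$. Together with the upper bound this proves equality.

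\textbf{Main obstacle.} The delicate step is the genericity argument: one must show that for a general point of $I_g$ — equivalently, for a general choice of the $2g$ branch points in $\mathbb P^1$ modulo $\mathrm{Aut}(\mathbb P^1)$ — no degree-$k'$ map can satisfy more than $2k'-2$ of the incidence conditions $W_j$ simultaneously, and this must hold uniformly over the finitely many relevant values of $(k',\ell)$ and over the (at most finitely many, up to the action on the target) combinatorial types of how the nodes distribute among branches of $C'$. The clean way to package this is to consider the incidence variety $\{(\varphi, p_1^{(1)},\dots,p_g^{(2)}) : \varphi\in\mathcal G_{k'},\ \varphi(p_j^{(1)})=\varphi(p_j^{(2)})\text{ for }j\in S\}$ inside $\mathcal G_{k'}\times P_{2g}$ for each subset $S$ with $|S| = g-\ell$, bound its dimension by $(2k'-2) + (2g - |S|)$, and conclude that its image in $P_{2g}$ (hence in $M_{0,2g}$, and hence the corresponding locus in $I_g$) is proper whenever $|S| > 2k'-2$. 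One also has to handle the case $\pi|_{C^\nu}$ constant, which forces $\ell = g$ and $k\geq g$, easily compatible with the bound for $g\geq 2$. Care with the ceiling-versus-floor and the parity of $g$ is needed throughout, since the paper's abstract states the bound with $\lfloor\cdot\rfloor$ while this theorem states it with $\lceil\cdot\rceil$; these agree because $g+3$ and $g+2$ have opposite parities.
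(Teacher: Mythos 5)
Your overall strategy coincides with the paper's: the upper bound comes from Lemma \ref{lemamapsP1}(ii) applied to the $g$ pairs of branches, plus the descent of the resulting map on $C^\nu=\mathbb P^1$ to an admissible cover of $C$; the lower bound for general $C$ comes from Lemma \ref{lemairred} together with the genericity statement that, for a general choice of the $2g$ branch points, a map of degree $k'$ can satisfy at most $2k'-2$ of the conditions $\varphi(n_j^{(1)})=\varphi(n_j^{(2)})$. Your incidence-variety sketch for that genericity statement (dimension at most $(2k'-2)+(2g-|S|)$, hence non-dominant projection to $P_{2g}$ when $|S|>2k'-2$) is the right argument, and is in fact spelled out in more detail than in the paper, which only asserts that the corresponding set $U$ is open and dense. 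Two genuine problems remain, however.

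First, a hypothesis failure in your upper bound: Proposition \ref{lemaprop311} is stated for a stable curve whose irreducible components are \emph{smooth}, so it cannot be applied with $p=1$ to an irreducible curve with $g\geq 1$ nodes --- its unique component is not smooth, and every node is a self-node of that one component. The step you need (from a map $\varphi$ on the normalization identifying the two branches of each node to the $k$-gonality of $C$) is exactly what the paper imports from \cite[Corollary 3.5]{paper1}; as written, your citation does not cover it. (Your perturbation to make $\varphi$ simply branched is also unnecessary and risky, since a perturbation inside $\mathcal G_k$ need not preserve the node conditions; none of the quoted results require simple branching.)

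Second, and more seriously, the final ``parity refinement'' is not a real step. From $g-\ell\leq 2k'-2$ and $k\geq k'+\ell$ you get $k\geq\left\lceil\frac{g+\ell+2}{2}\right\rceil$, which at $\ell=0$ gives only $k\geq\left\lceil\frac{g+2}{2}\right\rceil=\left\lfloor\frac{g+3}{2}\right\rfloor$, and no integrality or case analysis can raise this to $\left\lceil\frac{g+3}{2}\right\rceil$ when $g$ is even --- because that stronger bound is false: running your own upper-bound construction with the optimal degree $\left\lceil\frac{g+2}{2}\right\rceil$ (which already satisfies $g\leq 2k-2$) shows $\gon(C)\leq\left\lfloor\frac{g+3}{2}\right\rfloor$ for \emph{every} such curve, so equality at $\left\lceil\frac{g+3}{2}\right\rceil$ cannot hold for even $g$. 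The ceiling in the theorem's display is a slip in the paper (the abstract and the binary-curve theorem have the floor, and the paper's argument really establishes $\gon(C)=\left\lfloor\frac{g+3}{2}\right\rfloor$ for general $C$ in $I_g$); your closing remark that the floor and ceiling versions ``agree'' is incorrect, as they differ by $1$ for even $g$. In short: carried out honestly, your argument proves the intended floor statement, essentially as the paper does, but the advertised upgrade to the ceiling would fail.
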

\begin{proof}
Let $C$ be an irreducible curve of genus $g=g(C)$ and nodes $n_1,\ldots,n_g$.
Then  the branches of the nodes give two $g$-uples of points 
$\underline n^{(i)}=(n_1^{(i)}, \ldots, n_g^{(i)})\in (\mathbb P^1)^{g}$, 
such that 
\begin{equation}\label{eqdiagonal}
n_j^{(i)}\neq n_{j'}^{(i')} \;\text{ unless }\;(i,j)=(i',j')
\end{equation}
for $i=1,2$ and $j=1,\ldots,g$.

By Lemma \ref{lemamapsP1} (ii), there exists
a non-constant morphism $\varphi\:\mathbb P^1\rightarrow \mathbb P^1$ of degree at most $k=\lfloor \frac{g+2}2\rfloor$
such that 
$\varphi(n_j^{(1)})=\varphi(n_j^{(2)})$ for every $j=1,\ldots,g$.
Hence, by \cite[Corollary 3.5]{paper1}, $C$ is $k$-gonal and 
$\gon(C)\leq k=\lfloor \frac{g+2}2\rfloor=\lceil\frac{g+3}2\rceil,$
showing the first statement.

To show the second statement, we 
note that $P:=P_{2g}$ is 
the set of $(\underline n^{(1)},\underline n^{(2)})\in (\mathbb P^1)^g\times (\mathbb P^1)^g$ satisfying \eqref{eqdiagonal} (see Subsection \ref{sec:back}).
Since any $(\underline n^{(1)},\underline n^{(2)})\in P$ gives an irreducible curve of genus $g$ with $g$ nodes by identifying $n_j^{(1)}$ and $n_j^{(2)}$ for every $j=1,\ldots,g$, then
there is a surjective map  
$\Phi\:P\rightarrow I_g$.

For each $k,r\in \mathbb N$,  denote by $P(k,r)$ the set of the pairs 
$(\underline n^{(1)},\underline n^{(2)})$  in $P$ such that 
there exists $I\subset \{1,\ldots,g\}$ of cardinality $r$ and 
a non-constant map $\varphi\:\mathbb P^1\rightarrow \mathbb P^1$ of degree at most $k$ satisfying
$\varphi(n_j^{(1)})=\varphi(n_j^{(2)})$ for all $j\in I$.
Then $P(k,r)\subset P(k+1,r)$, 
$P(k,r)$ is closed in $P$ 
and, by Lemma \ref{lemamapsP1} (ii), we have 
$P(\lceil \frac{r+3}{2}\rceil,r)=P$ for every $1\leq r\leq g$.

Set 
$$U:=P\setminus
\bigcup_{r=1}^g 
P\left(\left\lceil \frac{r+3}{2}\right\rceil-1,r\right).$$
Since $U$ is open and dense in $P$,
its image $\Phi(U)$ contains an open an dense subscheme of $I_g$.
Let 
$C$ be an irreducible curve with $g(C)=g$ nodes corresponding to a point $(\underline n^{(1)},\underline n^{(2)})\in U$.   
We will show that
$\gon(C)=\lceil\frac{g+3}{2}\rceil$. 

Let 
$\pi\: C'\rightarrow B$ be an admissible cover of degree $k:=\gon(C)$, where $C'$ is stably equivalent to $C$.
Restricting  $\pi$ to the normalization $C^{\nu}=\mathbb P^1$ of $C$, we obtain a non-constant map $\varphi:=\pi|_{C^{\nu}}\:\mathbb P^1\rightarrow \mathbb P^1$ 
 of degree $k':=\deg(\varphi)$. 
Let $J$ be the set of $j\in\{1,\ldots,g\}$ such that 
$\varphi(n_j^{(1)})\neq \varphi(n_j^{(2)})$ and let $\ell=|J|$.
By Lemma \ref{lemairred}, we have $k\geq k'+\ell$.

On the other hand, let $\gamma$  be the cardinality of $I:=\{1,\ldots,g\}\setminus J$, then $\ell+\gamma=g$.
Since $(\underline n^{(1)},\underline n^{(2)})\in U$,
from the existence of the degree-$k'$ map $\varphi$ such that $\varphi(n_j^{(1)})=\varphi(n_j^{(2)})$ for every $j\in I$, we get that 
$k'\geq \lceil \frac{\gamma+3}{2}\rceil=\lceil \frac{g-\ell+3}{2}\rceil$.
Since $k\leq \lceil \frac{g+3}{2}\rceil$, then
$$\left\lceil \frac{g-\ell+3}{2}\right\rceil+\ell
\leq k'+\ell
\leq k
\leq \left\lceil \frac{g+3}{2}\right\rceil,$$
which implies that $\ell=0$ and we must have $k= \lceil \frac{g+3}{2}\rceil$.
\end{proof}

\subsection{Binary curves}\label{sectionbinary}

A binary curve $C$ is a nodal curve made of two smooth rational components meeting at $g+1$ points. Note that the genus of $C$ is $g(C)=g$ and its normalization is two disjoint copies of $\mathbb P^1$.

More preciselly,  if $n^{(i)}_1,\ldots,n^{(i)}_{g+1}\in \mathbb P^1$ are distinct points, 
we set $\underline n^{(i)}=(n^{(i)}_1,\ldots,n^{(i)}_{g+1})$, 
for $i=1,2$,
and 
we denote by $C(\underline n^{(1)},\underline n^{(2)})$ the binary curve obtained by gluing two copies of $\mathbb P^1$ at $n^{(1)}_j$ and $n^{(2)}_j$ for every $j$.

\begin{center}
\[
\xy
(5,50)*{}="A";
(-5,0)*{}="B";
"A"; "B" **\crv{(-30,25) & (30,25)};
(-5,50)*{}="C";
(5,0)*{}="D";
"C"; "D" **\crv{(30,25) & (-30,25)};
(3,46)*{\mbox{\footnotesize $n_1$}};
(3,25)*{\mbox{\footnotesize $n_2$}};
(3,3.5)*{\mbox{\footnotesize $n_3$}};
(0,-5)*{\mbox{\footnotesize $C$}};
(-30,50)*{}="E";
(-30,0)*{}="F";
"E"; "F" **\dir{-};
(-30,44)*{-};
(-33,45)*{\mbox{\footnotesize $n_1^{(2)}$}};
(-30,25)*{-};
(-33,26)*{\mbox{\footnotesize $n_2^{(2)}$}};
(-30,5)*{-};
(-33,6)*{\mbox{\footnotesize $n_3^{(2)}$}};
(-30,-5)*{\mbox{\footnotesize $\mathbb{P}^1$}};
(-45,50)*{}="G";
(-45,0)*{}="H";
"G"; "H" **\dir{-};
(-45,44)*{-};
(-48,45)*{\mbox{\footnotesize $n_1^{(1)}$}};
(-45,25)*{-};
(-48,26)*{\mbox{\footnotesize $n_2^{(1)}$}};
(-45,5)*{-};
(-48,6)*{\mbox{\footnotesize $n_3^{(1)}$}};
{\ar^{\nu} (-20,25); (-10,25)};
(-45,-5)*{\mbox{\footnotesize $\mathbb{P}^1$}};
\endxy
\]
A binary curve of genus 2.
\end{center}

\smallskip

Two binary curves $C(\underline n^{(1)},\underline n^{(2)})$  and 
$C(\underline m^{(1)},\underline m^{(2)})$ 
are isomorphic if there are isomorphisms $\psi_i$ of $\mathbb P^1$ such that $\psi_i(n^{(i)}_j)=m^{(i)}_j$ for every $j=1,\ldots,n$ and $i=1,2$.
Hence, if $B_g\subset \overline M_g$ is the locus of binary curves of genus $g\geq 2$,
there exists a surjective morphism with finite fibers
\begin{equation}\label{eq:Bg}
M_{0,g+1}\times M_{0,g+1}\rightarrow B_g.
\end{equation}

\begin{proposition}\label{prop:binaryhyperel}
Let $C=C(\underline n^{(1)},\underline n^{(2)})$  be a binary curve of genus $g\geq 2$.
Then $C$ is hyperelliptic if and only if there exists an automorphism $\psi\colon \mathbb P^1\rightarrow \mathbb P^1$ such that $\psi(n_j^{(1)})=n_j^{(2)}$ for every $j=1,\ldots,\delta$.
\end{proposition}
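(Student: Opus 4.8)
The plan is to treat the two implications separately, using the admissible-cover characterization of gonality (Theorem \ref{HarrMum}) for the forward direction and Proposition \ref{lemaprop311} for the converse.

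For the converse ($\Leftarrow$), suppose such an automorphism $\psi$ exists. Take $\pi_1=\mathrm{id}\colon C_1\to \mathbb P^1$ of degree $1$ and $\pi_2=\psi\colon C_2\to \mathbb P^1$ of degree $1$. Since $\psi(n_j^{(1)})=n_j^{(2)}$ for every $j$, after identifying the target copies of $\mathbb P^1$ we have $\pi_1(n_j^{(1)})=\pi_2(n_j^{(2)})$ for every node $n_j$. Then Proposition \ref{lemaprop311} (with $p=2$, $k_1=k_2=1$) shows that $C$ is $2$-gonal, i.e.\ hyperelliptic.

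For the forward direction ($\Rightarrow$), assume $C$ is hyperelliptic and pick, by Theorem \ref{HarrMum}, a $2$-sheeted admissible cover $\pi\colon C'\to B$ with $C'$ stably equivalent to $C$. By the remark following the definition of stable equivalence, we may regard the two smooth rational components $C_1,C_2$ of $C$ as components of $C'$. Set $B_i=\pi(C_i)$ and $k_i=\deg(\pi|_{C_i})$. The key point is to rule out $B_1\neq B_2$: by Lemma \ref{lemaB1B2}(i), if $\pi(C_1)\neq\pi(C_2)$ then $\deg(\pi)\geq \delta = g+1\geq 3$, contradicting $\deg(\pi)=2$. Hence $B_1=B_2=:B_0\cong\mathbb P^1$. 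Now apply Lemma \ref{lemaB1B2}(ii): with $\ell$ the number of nodes $n_j$ such that $\pi(n_j^{(1)})\neq\pi(n_j^{(2)})$, we get $2=\deg(\pi)\geq k_1+k_2+\ell$, so $k_1=k_2=1$ and $\ell=0$. Thus $\pi|_{C_1}$ and $\pi|_{C_2}$ are isomorphisms $C_i\xrightarrow{\sim}B_0$, and $\ell=0$ means $\pi|_{C_1}(n_j^{(1)})=\pi|_{C_2}(n_j^{(2)})$ for every $j$. Setting $\psi=(\pi|_{C_2})^{-1}\circ(\pi|_{C_1})\colon\mathbb P^1\to\mathbb P^1$ — an automorphism — we obtain $\psi(n_j^{(1)})=n_j^{(2)}$ for all $j=1,\ldots,\delta$, as required.

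I expect the main (and really only) obstacle to be checking that the forward direction's bookkeeping is airtight: one must make sure the components $C_1,C_2$ genuinely survive into $C'$ (handled by \cite[Lemma 3.2]{paper1}), that the hypothesis $g\geq 2$ is what forces $\delta=g+1\geq 3$ and hence excludes the $B_1\neq B_2$ case, and that the identification of targets in Lemma \ref{lemaB1B2}(ii) is compatible with reading off $\psi$ as a genuine automorphism of $\mathbb P^1$. The rest is a direct application of the already-established lemmas, so the proof should be short.
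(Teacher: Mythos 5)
Your proposal is correct and follows essentially the same route as the paper: Proposition \ref{lemaprop311} with two degree-one maps for the converse, and Lemma \ref{lemaB1B2} (part (i) to force $\pi(C_1)=\pi(C_2)$ via $\delta=g+1\geq 3>2$, part (ii) to force $k_1=k_2=1$, $\ell=0$) for the forward direction, recovering $\psi=(\pi|_{C_2})^{-1}\circ(\pi|_{C_1})$ exactly as in the text. Two cosmetic points: with your choice $\pi_1=\mathrm{id}$, $\pi_2=\psi$ the matching condition reads $n_j^{(1)}=\psi(n_j^{(2)})$, so you should take $\pi_2=\psi^{-1}$ (or, as the paper does, $\pi_1=\psi$, $\pi_2=\mathrm{id}$); and ``$2$-gonal, i.e.\ hyperelliptic'' implicitly uses that $\gon(C)\geq 2$, which holds because $g\geq 2$ rules out gonality $1$.
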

\begin{proof}
First assume there is such an automorphism. 
Set $C_1:=\mathbb P^1=:C_2$ and $\pi_1:=\psi$  and let $\pi_2$ be the identity on $\mathbb P^1$. 
Then, for $i=1,2$, 
$\pi_i\:C_i\rightarrow \mathbb P^1$ are maps of degree 1 that  satisfy the conditions of Proposition \ref{lemaprop311}, which implies that $C$ is 2-gonal. Since $C$ is not rational, we have $\gon(C)=2$.

Now assume that $C$ is hyperelliptic and let $\pi\:C'\rightarrow B$ be an admissible cover of degree 2 where $C'$ is stably equivalent to $C$. 
For $i=1,2$ let $k_i$ be the degree of $\pi_i:=\pi|_{C_i}$, 
then $k_i\leq \deg(\pi)=2$.

Let $B_i:=\pi_i(C_i)$ for $i=1,2$. 
If $B_1\neq B_2$, by Lemma \ref{lemaB1B2}, we have $3\leq g+1=\delta\leq\deg(\pi)=2$, a contradiction.
Then $B_1=B_2$ and, 
in the notation of Lemma \ref{lemaB1B2}, we have 
$2=\deg(\pi)\geq k_1+k_2+\ell$.
Hence $k_1=k_2=1$, $\ell=0$ and $\psi:=\pi_2^{-1}\circ\pi_1$ is the required automorphism of $\mathbb P^1$. 
\end{proof}

In particular, 
since there is always an automorphism of $\mathbb P^1$ taking any set of three distinct points to another one, we see that
the previous result 
recovers, in the case of binary curves, the already know fact that all stable curves of genus 2 are hyperelliptic.

The first part of the following result was already proved in 
\cite[Lemma 5.3]{franciosi},
with the same argument. We include a proof here, for the sake of completeness.

\begin{theorem}\label{teo:binary}
Let $C$  be a binary curve of genus $g(C)\geq 2$.
Then 
$$\gon(C)\leq \left\lfloor\dfrac{g(C)+3}{2}\right\rfloor$$
and equality holds if $C$ is general in $B_g$. 
\end{theorem}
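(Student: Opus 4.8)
The plan is to follow the same two-part structure as in Theorem \ref{thmirred}, adapting the arguments from the irreducible case to the binary setting, and relying on Proposition \ref{lemaprop311} in place of \cite[Corollary 3.5]{paper1}. For the upper bound, write $C=C(\underline n^{(1)},\underline n^{(2)})$ with $\delta=g+1$ nodes. Set $k=\lfloor\frac{g+3}{2}\rfloor$ and split $k=k_1+k_2$ with $k_1=\lceil k/2\rceil$ and $k_2=\lfloor k/2\rfloor$; one checks $k_i\geq 1$ once $g\geq 2$. The key point is that we need a pair of maps $\pi_i\:C_i=\mathbb P^1\to\mathbb P^1$ of degrees $k_i$ with $\pi_1(n_j^{(1)})=\pi_2(n_j^{(2)})$ for every $j=1,\ldots,g+1$. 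Since $\delta=g+1\leq 2k-2$ exactly when $g+1\leq 2\lfloor\frac{g+3}{2}\rfloor-2$, which holds for $g\geq 2$ (a short case check on the parity of $g$), Lemma \ref{lemamapsP1}(i) applied with $\ell=g+1$, $k=k_1$ — using $\pi_2$ a fixed generic map of degree $k_2$ and asking $\pi_1$ to send $n_j^{(1)}$ to $\pi_2(n_j^{(2)})$ — produces the desired $\pi_1$. Then Proposition \ref{lemaprop311} yields that $C$ is $(k_1+k_2)$-gonal, i.e.\ $\gon(C)\leq\lfloor\frac{g+3}{2}\rfloor$.

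For the second (genericity) statement, I would mimic the stratification argument of Theorem \ref{thmirred} but now working on $P_{g+1}\times P_{g+1}$ (equivalently, via \eqref{eq:Bg}, on $M_{0,g+1}\times M_{0,g+1}$), and accounting for the extra freedom of choosing the \emph{partition} $k=k_1+k_2$. For each triple $(k,r_1,r_2)$ with $r_1+r_2\leq g+1$, define the locus $P(k,r_1,r_2)$ of pairs $(\underline n^{(1)},\underline n^{(2)})$ admitting subsets $I_1,I_2\subseteq\{1,\ldots,g+1\}$ of sizes $r_1,r_2$ and a pair of maps of degrees $\leq k_1',k_2'$ with $k_1'+k_2'=k$ such that the two maps agree on the nodes indexed by $I_1\cup I_2$ — more precisely one wants the agreement condition $\pi_1(n_j^{(1)})=\pi_2(n_j^{(2)})$ to hold for all $j$ in a set of size $r_1+r_2$. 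These loci are closed (image of a Grassmannian bundle under a proper map), nested appropriately, and exhaust $P_{g+1}\times P_{g+1}$ at the threshold value dictated by Lemma \ref{lemamapsP1}(i). Removing the finitely many strata sitting strictly below the expected threshold gives a dense open $U$; a curve $C$ over a point of $U$ has $\gon(C)=\lfloor\frac{g+3}{2}\rfloor$.

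To verify that such a $C$ really has gonality exactly $\lfloor\frac{g+3}{2}\rfloor$, take an admissible cover $\pi\:C'\to B$ of degree $k=\gon(C)$, with restrictions $\pi_i=\pi|_{C_i}$ of degree $k_i$ and images $B_i=\pi(C_i)$. If $B_1\neq B_2$, Lemma \ref{lemaB1B2}(i) forces $k\geq\delta=g+1$, which contradicts the upper bound for $g\geq 2$; so $B_1=B_2$. Then Lemma \ref{lemaB1B2}(ii) gives $k\geq k_1+k_2+\ell$ where $\ell$ counts the nodes on which $\pi_1,\pi_2$ disagree, so there are $\gamma=g+1-\ell$ nodes on which they agree. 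Genericity ($C\notin$ the removed strata) forces $k_1'+k_2'\geq$ the threshold value for $\gamma$ agreements, and combining this with $k\geq k_1+k_2+\ell$ and $k\leq\lfloor\frac{g+3}{2}\rfloor$ pins down $\ell=0$ and $k=\lfloor\frac{g+3}{2}\rfloor$. The main obstacle I anticipate is purely bookkeeping: getting the threshold inequality from Lemma \ref{lemamapsP1}(i) right for a \emph{pair} of maps (the relevant bound is $r_1+r_2\leq\dim\mathcal G_{k_1}+\dim\mathcal G_{k_2}+$(dimension of the choice of the target identification), i.e.\ one must carefully count how many of the $g+1$ matching conditions a degree-$(k_1,k_2)$ configuration can satisfy), and then checking the parity case analysis so that the numerology collapses exactly to $\lfloor\frac{g+3}{2}\rfloor$, matching the $g=2$ hyperelliptic case recovered by Proposition \ref{prop:binaryhyperel}.
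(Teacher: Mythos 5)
Your overall architecture (construct maps on the two components agreeing at all nodes, apply Proposition \ref{lemaprop311} for the upper bound; stratify $P_{g+1}\times P_{g+1}$ and use Lemma \ref{lemaB1B2} for the generic lower bound) is the same as the paper's, but the upper bound step has a genuine numerical gap that no amount of bookkeeping fixes. You want to impose all $\delta=g+1$ matching conditions $\pi_1(n_j^{(1)})=\pi_2(n_j^{(2)})$ by applying Lemma \ref{lemamapsP1}(i) with $\ell=g+1$ and degree $k_1=\lceil k/2\rceil$, where $k=\lfloor\frac{g+3}{2}\rfloor$. The hypothesis of that lemma is $\ell\leq 2k_1-2$, and this fails for every $g\geq 2$: e.g.\ for $g=4$ you need $5\leq 2$, for $g=10$ you need $11\leq 4$. (Your preliminary check ``$g+1\leq 2k-2$ for $g\geq 2$'' is also false for even $g$, where $2k-2=g$, and in any case the relevant quantity is $2k_1-2$, not $2k-2$, since you froze $\pi_2$.) Nor does a better split help: with the optimal choice $k_1=k-1$, $k_2=1$ you can prescribe at most $2(k-1)-2=2\lceil\frac{g}{2}\rceil-2\leq g$ values, and letting both maps vary gives $\dim(\mathcal G_{k_1}\times\mathcal G_{k_2})=2k-4<g+1$; so under no partition of $k$ does the naive count reach $g+1$ conditions --- there is a deficit of roughly three.

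The missing idea is exactly how the paper closes this deficit: one only asks the degree-$\lceil\frac{g}{2}\rceil$ map $\varphi$ to satisfy $\varphi(n_j^{(1)})=n_j^{(2)}$ for $j=1,\ldots,g-2$ (which does meet the hypothesis $g-2\leq 2\lceil\frac{g}{2}\rceil-2$ of Lemma \ref{lemamapsP1}(i)), and the remaining three conditions are absorbed by the freedom in the gluing data of a binary curve: the paper replaces the last three points $n_j^{(2)}$ by $\varphi(n_j^{(1)})$, claiming an isomorphic binary curve, and then applies Proposition \ref{lemaprop311} to $\varphi$ and the identity, giving total degree $\lceil\frac{g}{2}\rceil+1=\lfloor\frac{g+3}{2}\rfloor$. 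In other words, three of the $g+1$ matching conditions are not honest conditions because the curve only depends on the marked points up to automorphisms of each component; your proposal never uses this, and without it the construction cannot work. The same numerology is what your genericity argument needs for its ``threshold value'' $\lfloor\frac{\gamma+2}{2}\rfloor$, which you explicitly leave unresolved (and your strata with two index sets $I_1,I_2$ are not the relevant loci --- only one set of matched nodes enters); once the threshold is set up as in the paper, your concluding chain $\lfloor\frac{g+3-\ell}{2}\rfloor+\ell\leq k_1+k_2+\ell\leq k\leq\lfloor\frac{g+3}{2}\rfloor$ forcing $\ell=0$ does match the paper's argument.
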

\begin{proof}
Set $C=C(\underline n^{(1)},\underline n^{(2)})$ and set $g=g(C)$.
By Lemma \ref{lemamapsP1} (i) with $\ell=g-2$, there exists a 
morphism $\varphi\:\mathbb P^1\rightarrow \mathbb P^1$ of degree $k=\lceil \frac{g}2\rceil$
such that 
$\varphi(n_j^{(1)})=n_j^{(2)}$ for every $j=1,\ldots,g-2$.

Now note that $C(\underline n^{(1)},\underline n^{(2)})$ is isomorphic to the binary curve 
$C(\underline n^{(1)},\underline m^{(2)})$ where $m^{(2)}_j=n^{(2)}_j$ for $j=1,\ldots,g-2$ and 
$m^{(2)}_j=\varphi(n^{(1)}_j)$ for $j=g-1,g,g+1$. Hence we may assume that $\varphi(n_j^{(1)})=n_j^{(2)}$ for every $j=1,\ldots,g+1$.
Therefore, by Proposition \ref{lemaprop311} applied to $\varphi$ and the identity of $\mathbb P^1$, $C$ is $(k+1)$-gonal and 
$$\gon(C)\leq k+1 = \left\lceil \frac{g+2}2\right\rceil
=\left\lfloor \frac{g+3}2\right\rfloor
.$$

Now we prove the second statement. 
As in the proof of Theorem \ref{thmirred}, 
for each $k, \gamma\in \mathbb N$,  we denote by $P(k,\gamma)$ the set of the pairs 
$(\underline n^{(1)},\underline n^{(2)})$  in $P_{g+1}\times P_{g+1}$ (see Subsection \ref{sec:back}) such that 
there exists $I\subset \{1,\ldots,g+1\}$ of cardinality $\gamma$ and, for $i=1,2$, a
map $\psi_i\:\mathbb P^1\rightarrow \mathbb P^1$ satisfying 
$\psi_1(n_j^{(1)})=\psi_2(n_j^{(2)})$ for all $j\in I$
and such that $\deg(\psi_1)+\deg(\psi_2)$ is at most $k$.
Then $P(k,\gamma)\subset P(k+1,\gamma)$, 
$P(k,\gamma)$ is closed in $P_{g+1}\times P_{g+1}$ 
and, by Lemma \ref{lemamapsP1} (i), we have 
$P(\lfloor \frac{\gamma+2}{2}\rfloor,\gamma)=P$ for every $\gamma\geq 4$.

Set 
$$U:=P\setminus
\bigcup_{\gamma=1}^{g+1} 
P\left(\left\lfloor \frac{\gamma+2}{2}\right\rfloor-1,\gamma\right)$$
and note that $U$ is open and dense in $P_{g+1}\times P_{g+1}$.
If $\Phi\:P_{g+1}\times P_{g+1}\rightarrow M_{0,g+1}\times M_{0,g+1} \rightarrow B_g
$ is the composition of the maps \eqref{eq:M0n} and \eqref{eq:Bg}, then 
the image $\Phi(U)$ contains an open dense subscheme of $B_g$.
Let 
$C$ be a binary curve of genus $g$ corresponding to   a point $(\underline n^{(1)},\underline n^{(2)})\in U$.
We will show that
$\gon(C)=\lfloor\frac{g+3}{2}\rfloor$.

Let 
$\pi\: C'\rightarrow B$ be an admissible cover of degree $k:=\gon(C)$, where $C'$ is stably equivalent to $C$. 
By the first part of the result, we have $k\leq \lfloor\frac{g+3}{2}\rfloor< g+1.$
Restricting  $\pi$ to the components $C_i=\mathbb P^1$ of $C$, we obtain maps 
$\psi_i:=\pi|_{C_i}\:\mathbb P^1\rightarrow \mathbb P^1$ 
 of degree $k_i:=\deg(\psi_i)$. 
Let $J$ be the set of $j\in\{1,\ldots,g+1\}$ such that 
$\psi_1(n_j^{(1)})\neq \psi_2(n_j^{(2)})$ and let $\ell=|J|$.
By Lemma \ref{lemaB1B2},  we have $\pi(C_1)= \pi(C_2)$ and thus $k\geq k_1+k_2+\ell$.

On the other hand, let $\gamma$  be the cardinality of $I:=\{1,\ldots,g+1\}\setminus J$, then $\gamma=g+1-\ell$.
Since $(\underline n^{(1)},\underline n^{(2)})\in U$,
from the existence of $\psi_1$ and $\psi_2$ such that $\psi_1(n_j^{(1)})=\psi_2(n_j^{(2)})$ for every $j\in I$, we get that $k_1+k_2\geq 
\lfloor \frac{\gamma+2}{2}\rfloor=
\lfloor \frac{g+3-\ell}{2}\rfloor$.
Since by Proposition \ref{lemagraum} we have $k\leq \lfloor \frac{g+3}{2}\rfloor$, then
$$\left\lfloor \frac{g+3+\ell}{2}\right\rfloor
= \left\lfloor \frac{g+3-\ell}{2}\right\rfloor+\ell
\leq k_1+k_2+\ell
\leq k
\leq \left\lfloor \frac{g+3}{2}\right\rfloor
$$
which implies that $\ell=0$.
Thus $\gamma=g+1$ and $k= \lfloor \frac{g+3}{2}\rfloor$.
\end{proof}


\bibliographystyle{aalpha}

\begin{thebibliography}{99}                                                                                             

\bibitem{cv} D. Abramovich and A. Vistoli, 
\textit{Compactifying the space of stable maps}, 
 J. Amer. Math. Soc. 15 (2002), p. 27--75. 






\bibitem{capobinary} L. Caporaso,
\textit{Brill-Noether theory of binary curves},
Math. Res. Letters 17 n. 2 (2010), p. 243--262.



\bibitem{caporaso} L. Caporaso, 
\textit{Gonality of algebraic curves and  graphs},
Algebraic and Complex Geometry: In honour of Klaus Hulek's 60th Birthday, Springer Proceedings in Mathematics and Statistics 71 (2014), p. 77--109.







\bibitem{paper1} J. Coelho and F. Sercio, 
\textit{On the gonality of stable curves}, 
Mathematische Nachrichten 292 n. 11 (2019), p. 2338--2351.

\bibitem{paper2} J. Coelho and F. Sercio, 
\textit{Characterizing gonality for two-component stable curves},
Geom Dedicata 214 n. 1 (2021), p. 157--176.

\bibitem{colfred}  E. Colombo and P. Frediani,
\textit{Prym map and second gaussian map for Prym-canonical line bundles}, 
Advances in Mathematics 239 (2013), p. 47--71.


\bibitem{eisenharris} D. Eisebud and J Harris,
\textit{Limit linear series: Basic theory}
Invent. math. 85 (1986), p. 337--371.


\bibitem{em} E. Esteves and N. Medeiros, 
\textit{Limit canonical systems on curves with two components}, 
Inventiones Mathematicae 149 n. 2 (2002), p. 267--338.



\bibitem{franciosi} M.Franciosi, 
\textit{Scrolls containing binary curves},
Rend. Circ. Mat. Palermo II. 69 (2020), p. 325--338.


\bibitem{gh80} P. Griffiths and J. Harris, 
\textit{On the variety of special linear systems on a general algebraic curve},
Duke Math. J 47 (1980) n. 1, p. 233--272.

\bibitem{HM} J. Harris and I. Morrison, 
\textit{Moduli of curves}. 
Springer, New York, 1998.

\bibitem{HarrMum} J. Harris and D. Mumford, 
\textit{On the Kodaira dimension of the moduli space of curves}, 
Invent. Math. 67 (1982), p. 23--86.


\bibitem{he} X. He,
\textit{Brill-Noether Generality of Binary Curves},
Canadian Math. Bul. 64 n. 4 (2021), p. 787--807.



\bibitem{renato} D. Lara, J. Menezes Souza and R. Vidal Martins,
\textit{On gonality, scrolls, and canonical models of non-Gorenstein curves},
Geometriae Dedicata 198 (2019), p. 1--23.


\bibitem{osser} B. Osserman,
\textit{Limit linear series for curves not of compact type},
Journal für die reine und angewandte Mathematik (Crelles Journal) 753 (2019), p. 57--88.



\bibitem{sthor} R. Rosa, K.-O. St\"ohr,
\textit{Trigonal Gorenstein curves},
Journal of Pure and Applied Algebra 174 (2002), p. 187--205.



\end{thebibliography}

\addcontentsline{toc}{section}{References}

\bigskip
\noindent{\smallsc Juliana Coelho, Universidade Federal Fluminense (UFF), 
Instituto de Matem\'atica e Estat\'istica - 
 Rua Prof. Marcos Waldemar de Freitas Reis, S/N, 
 Gragoat\'a, 24210-201 - Niter\'oi -  RJ,  Brasil}\\
{\smallsl E-mail address: \small\verb?julianacoelhochaves@id.uff.br?}

\end{document}